\documentclass[12pt,leqno]{amsart}

\usepackage{pifont}
\usepackage{stmaryrd}
\usepackage{dsfont}
\usepackage{mathrsfs}
\usepackage{amsmath}
\usepackage{amssymb}
\usepackage{hyperref}
\usepackage{bookmark}
\usepackage[bottom]{footmisc}
\usepackage{verbatim}
\usepackage{color, xcolor}
 \usepackage[all]{xy}
 \usepackage{amsmath, mathrsfs}
 \usepackage{extarrows}

\def\Z{\mathbb{Z}}

\def\C{\mathbb{C}}

\numberwithin{equation}{section}
\newtheorem{theo}{Theorem}[section]

\newtheorem{lemm}[theo]{Lemma}
\newtheorem{prop}[theo]{Proposition}

\allowdisplaybreaks

\begin{document}

\title[Derivations of rational VOAs are inner]{Derivations of rational vertex operator algebras are inner}

\author{Jianzhi Han}

\address{School of Mathematical Sciences, Key Laboratory of Intelligent Computing and Applications (Ministry of Education), Tongji University, Shanghai, 200092,
China.}\email{jzhan@tongji.edu.cn}

\subjclass[2010]{17B69, 17B40}

\keywords{Vertex operator algebra, (inner) derivation, rationality.}


\begin{abstract}
We show that every derivation of a simple and rational vertex operator algebra of  CFT type is an inner derivation.
\end{abstract}

\maketitle

\section{Induction}

Lie algebra theory lays a fundamental foundation for the study of algebraic structures and symmetry representations in mathematics and theoretical physics. A classical and pivotal result in finite-dimensional Lie algebra theory states that the finite-dimensional module category of a semisimple Lie algebra is completely reducible, which is commonly known as Weyl's semisimplicity theorem (see \cite{W1,W2,W3}). Equally essential is the classical conclusion that all derivations of semisimple  Lie algebras are inner  (cf.\,\cite{Hum}), which reveals the rigidity of the derivation structure of semisimple Lie algebras and plays an important role in the investigation of automorphism groups, deformation theories and symmetry properties of Lie algebras.

In vertex operator algebra (VOA) theory, the notion of rationality serves as the precise analogue of semisimplicity for finite-dimensional Lie algebras. By definition, a vertex operator algebra is rational if its category of admissible modules is semisimple, which implies the complete reducibility of all admissible representations and endows the VOA with excellent structural and representation-theoretic properties (see \cite{DLM,Z}). Rational vertex operator algebras constitute the most important and well-behaved class of vertex operator algebras, covering almost all classical VOA models closely related to physical backgrounds. Parallel to the classical Lie algebra problem of characterizing derivations, it is a fundamental  problem in VOA theory to clarify the structural relationship between the full derivation space and the inner derivation subspace of a rational vertex operator algebra. In fact, it has been conjectured that these two spaces coincide.

The conjecture is supported by two key pieces of evidence. First, the conjecture holds for all fundamental classes of rational vertex operator algebras, including the moonshine module $V^\natural$ \cite{B,D2,FLM}, the Virasoro vertex operator algebra $L(c,0)$ \cite{DMZ,FZ,W}, the affine vertex operator algebra $L_{\hat{\mathfrak g}}(l,0)$ \cite{DL,FZ,L}, and the lattice vertex operator algebra $V_L$ \cite{B,D1,FLM}. Second, every derivation of a rational vertex operator algebra is locally inner. Precisely, let $V=\bigoplus_{i\in\mathbb Z_+}V_i$ be a rational vertex operator algebra and let $d$ be any derivation of $V.$ For each nonnegative integer $n$, there exists some $u^{(n)}\in V$ such that $d=o(u^{(n)})$ on $\bigoplus_{i=0}^nV_i$. Here, $o$ denotes the linear map from $V$ to $\operatorname{End}V$ that sends each homogeneous vector $v$ to its zero mode operator $v_{\operatorname{wt}v-1}$. This local inner property follows from the existence of a surjective associative algebra homomorphism from $A_n(V)$ \cite{DLM3,Z} to $\bigoplus_{i=0}^n \operatorname{End}V_i$.

The Killing form, a canonical nondegenerate invariant bilinear form, serves as a fundamental tool for proving the inner property of all derivations on semisimple Lie algebras. However, such a Killing form cannot be defined for infinite-dimensional vertex operator algebras, and this absence constitutes a major obstacle to establishing that all derivations of a rational vertex operator algebra are inner.   The first attempt to relate derivations and inner derivations by means of  nondegenerate bilinear forms was made in \cite{DG}. In that work, the authors decomposed the full derivation space of a vertex operator algebra into the direct sum of the inner derivation subspace and its orthogonal complement with respect to invariant bilinear forms. Nevertheless, the triviality of the orthogonal complement, which is the key to establishing the coincidence of inner and general derivations, remained unproven and thus left the conjecture open.

To overcome the difficulties caused by the lack of Killing-type forms and the gap between local and global inner properties, we develop an analytical argument centered on the local approximation property of derivations and limit analysis in this paper. Although the problem investigated is purely algebraic in nature, our proof  introduces analytic limit techniques to bridge the gap between local finite-dimensional approximations and global infinite-dimensional structural identities. The core idea of our approach is as follows. For any derivation $d$ of a rational VOA $V,$ the locally inner property yields a sequence of  elements $u^{(n)}$ that realize the derivation $d$ on finite graded truncations. Due to the infinite-dimensional graded structure of $V,$ the finite-level matching identities $d=o(u^{(n)})$ on $\bigoplus_{i=0}^n V_i$ do not guarantee a uniform global element in $V,$ and the pointwise limit of the sequence $\{u^{(n)}\}$ may lie outside the original VOA space. To resolve this issue, we extend the  space  $V$ to its complete direct product space $\prod_{i\in\mathbb Z_+}V_i$, which accommodates all limit elements of graded sequences.

We first prove that for every derivation $d$, there exists an $o$-additive element $u\in \prod_{i\in\mathbb Z_+}V_i$ such that the global identity $d=o(u)$ holds on the entire space $V.$ We then introduce the key set $E_d$ consisting of all such $o$-additive representative elements for $d$ in the direct product space, and establish a precise characterization: the derivation $d$ is inner if and only if $E_d$ contains an element with an infinite $L(1)$-kernel index. Combined with the finite generation property of rational vertex operator algebras and the continuity of zero-mode operators, we further eliminate some components of representative elements in $E_d$ via iterative reduction and limit arguments. Finally, we extract a valid element in the weight-one subspace $V_1$ that globally realizes the derivation $d$, which completely verifies the inner property of arbitrary derivations.

We assume familiarity with fundamental notions of vertex operator algebra theory, including the definition of vertex operator algebras (cf.\,\cite{B,DL,FHL,FLM,LL}) and the definition of admissible  modules (see \cite{Z}).

\section{Derivation}

Let $V=\bigoplus_{i\in\mathbb Z}V_i$ be a vertex operator algebra and $M$ an admissible $V$-module. An element $u\in V$ is said to be homogeneous if $u\in V_n$ for some $n\in\mathbb Z$, in which case we set $\operatorname{wt}u = n$.
We define a linear map $o\colon V\to\operatorname{End}\,M$ which sends each homogeneous vector $v\in V$ to $v_{\operatorname{wt}v-1}$.
Let $\prod_{i\in\mathbb Z}V_i$ denote the direct product of the graded components $V_i.$ An element $\sum_{i\in\mathbb Z}v^i\in\prod_{i\in\mathbb Z}V_i$ with $v^i\in V_i$ is called \textit{$o$-additive} on $V$ provided that for every $u\in V,$ there exists $n\in\mathbb Z_+$ satisfying $o(v^i)u=0$ for all $i>n$; equivalently, the sum $\sum_{i\in\mathbb Z}o(v^i)u$ is finite.
Evidently, every element of $V$ is $o$-additive on $V.$ Since $o(L(-1)u+L(0)u)=0$ holds for all $u\in V,$ we obtain
\begin{equation}\label{lift-de}
o(L(-1)v)=-\operatorname{wt}v\,o(v).
\end{equation}
Closely related to \eqref{lift-de} is the so-called radical \(J(V)\) of $V,$ which denotes the subspace consisting of all \(u\in V\) satisfying \(o(u)=0\). It was shown in \cite{DLMM} that
 \begin{equation}\label{j(v)} J(V)=J(V)\cap V_1+(L(0)+L(-1))V.\end{equation}

 Recall from \cite{DG} that a {\em derivation} $d$ of $(V,Y, {\bf1},\omega)$ is an endomorphism of $V$ such that $d{\bf 1}=d\omega=0$ and $[d,u_n]=(du)_n$ for all  $u\in V$ and $n\in\Z$. In particular, \begin{equation}\label{d=comm}[d, L(n)]=0 \quad \text{for all}\  n\in\Z,\end{equation} where $L(n)=\omega_{n+1}$. Consequently, every derivation preserves all homogenous subspaces $V_i$. A derivation $d$ is called an {\em inner derivation} if $d=o(v)$ for some $v\in V.$ Denote by $\operatorname{Der}V$ and $\operatorname{IDer}V$ the set of derivations of $V$ and the set of inner derivations of $V,$ respectively. It should be pointed out that derivations of a vertex operator algebra can also be characterized in terms of its first cohomology \cite{Hua}. A vertex operator algebra $V$ is called to be of  {\em CFT type} if $V=\bigoplus_{i\in\Z_+}V_i$ and  $V_0=\C{\bf1}$.  
Then we have:

\begin{prop}\label{cit-fro-dg}\cite{DG} Let $V$ be a vertex operator algebra of  CFT type. Then
$\operatorname{IDer}\,V=\{o(v)\mid v\in V_1\}.$
\end{prop}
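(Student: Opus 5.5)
We have to show two inclusions. First, for $v\in V_1$ the map $o(v)=v_0$ is a derivation. Second, every inner derivation $o(v)$ with $v\in V$ already equals $o(v')$ for some $v'\in V_1$.

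For the first inclusion, take $v\in V_1$. By the commutator formula,
\[
[v_0,u_n]=\sum_{j\ge0}\binom{0}{j}(v_ju)_{n-j}=(v_0u)_n ,
\]
so $v_0$ satisfies the Leibniz-type identity. Moreover $v_0\mathbf 1=0$ by the creation property. We also need $v_0\omega=0$. Skew symmetry gives $v_0\omega=-\omega_0v+L(-1)(\ldots)$; more directly, $L(-1)v=v_{-2}\mathbf 1$, and $[L(m),v_0]=0$ because $[L(m),v_n]=\sum_j\binom{m+1}{j}(L(j-1)v)_{m+n-j+1}$ contributes only $(L(-1)v)_{m+1}+(m+1)(L(0)v)_{m}$, and these cancel since $(L(-1)v)_{k}=-kv_{k-1}$. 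Writing $\omega=L(-2)\mathbf 1$ then gives $v_0\omega=L(-2)v_0\mathbf 1=0$. Hence $\{o(v)\mid v\in V_1\}\subseteq\operatorname{Der}V$, and by definition these are inner derivations.

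For the reverse inclusion, let $d=o(v)$ be a derivation with $v=\sum_i v^i$ and $v^i\in V_i$; only finitely many $v^i$ are nonzero. Since $d\mathbf 1=0$ and $V_0=\C\mathbf1$, with $o(\mathbf 1)=\mathrm{id}$, the component $v^0=c\mathbf 1$ gives $o(v)\mathbf 1=c\mathbf1+\sum_{i\ge1}o(v^i)\mathbf1$. For $i\ge1$, $o(v^i)\mathbf1=v^i_{i-1}\mathbf 1=0$ because $i-1\ge0$. So $c=0$.

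The key step, which I expect to be the main obstacle, is removing the components of weight $\ge2$. I would use the derivation property of $o(v)$ to show that each homogeneous piece $o(v^i)$ is itself a derivation modulo the others. This is plausible because $o(v^i)$ commutes with $L(0)$ while $[o(v),u_n]$ preserves degree shifts, but that alone gives no splitting. Instead I would use $d\omega=0$ together with $[d,L(-1)]=0$. Since $[L(-1),o(v^i)]=o(\cdot)$ of the $L(-1)$-type terms, $[L(-1),v^i_{i-1}]=-(i-1)\cdots$; more precisely,
\[
[L(-1),v^i_{i-1}]=(L(-1)v^i)_{i-1}=-(i-1)v^i_{i-2}.
\]
Then $0=[L(-1),d]$ applied to $V$ gives $\sum_{i\ge2}(i-1)v^i_{i-2}=0$ on $V$. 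Evaluating this on $\mathbf 1$ and using $v^i_{i-2}\mathbf1=0$ for $i\ge2$ yields nothing, so we need the vacuum-like vector differently. Apply it to $\omega$, or better use $[L(1),d]=0$: this gives $\sum_i o(\cdot)$-relations of the form $\sum_i (L(1)v^i)$-terms.

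The cleanest route, which I would commit to, uses the radical formula \eqref{j(v)}. From $[L(-1),d]=0$ we get $o(u)=0$ for $u=\sum_{i\ge2}(i-1)(\text{appropriate }L(-1)$-preimage terms$)$, and more directly we can aim to rewrite $v=v'+w$ with $v'\in V_1$ and $o(w)=0$. Using \eqref{lift-de}, every $L(-1)V_{i-1}$ component is killed modulo $(L(0)+L(-1))V$. The content is therefore that $v^i$ for $i\ge2$ lies in $J(V)$ modulo $V_1$, which I would prove by showing that $o(v^{\ge2})$ vanishes on $\mathbf1$ and commutes with all modes. Such an operator commutes with all $u_n$ and kills the vacuum, so for $u\in V$ it satisfies $o(v^{\ge 2})u=o(v^{\ge2})u_{-1}\mathbf1=u_{-1}o(v^{\ge2})\mathbf 1=0$. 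For this I need $[o(v^{\ge2}),u_n]=(o(v^{\ge2})u)_n$, which follows because the weight-one part $o(v^1)$ is a derivation (first paragraph) and hence the difference $o(v)-o(v^1)=o(v^{\ge2})$ is again a derivation. Then $o(v^{\ge2})\mathbf1=0$ gives $o(v^{\ge2})=0$, so $d=o(v^1)$ with $v^1\in V_1$. This last argument is the heart of the proof and is in fact short once the first inclusion is established.
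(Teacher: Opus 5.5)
The paper does not prove this statement; it quotes it from \cite{DG}, so your argument has to stand on its own.

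\textbf{First inclusion.} This part is fine, up to a small omission. In $[L(m),v_0]$ you dropped the $j=2$ term $\binom{m+1}{2}(L(1)v)_{m-1}$. It vanishes because $L(1)v\in V_0=\C\mathbf 1$, $\mathbf 1_{m-1}=\delta_{m,0}\,\mathrm{id}$, and $\binom{1}{2}=0$. The terms with $j\ge3$ vanish because $V_{2-j}=0$.

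\textbf{Reverse inclusion: the gap.} There is a genuine gap exactly at the step you call the heart of the proof. You correctly note that $D=o(v^{\ge2})=d-o(v^1)$ is a derivation with $D\mathbf 1=0$. But you then write $Du_{-1}\mathbf 1=u_{-1}D\mathbf 1$, which uses $[D,u_{-1}]=0$. The derivation property only gives $[D,u_{-1}]=(Du)_{-1}$. With that, your computation becomes $Du=(Du)_{-1}\mathbf 1+u_{-1}D\mathbf 1=Du$, a tautology. In fact every derivation kills $\mathbf 1$ by definition. So if your argument were valid, it would show that every derivation vanishes, including $o(v^1)$ itself (e.g.\ $x_0$ with $x\in\mathfrak g$ on an affine vertex operator algebra). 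Nothing in your argument uses that the discarded components have weight $\ge2$, and that is where all the content lies.

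\textbf{How to close it.} You need an argument that separates weights. The natural tool is $[L(\pm1),d]=0$ from \eqref{d=comm}, which you touched on and then abandoned. For $u\in V_k$, the computation in \eqref{zerosum} gives
\[
\Delta\bigl(o(u)\bigr)=(k-1)\bigl(k\,o(u)+o(L(1)u)\bigr),\qquad \Delta:=-[L(-1),[L(1),\,\cdot\,]].
\]
Put $F_k=o\bigl(\bigoplus_{j=1}^kV_j\bigr)\subseteq\operatorname{End}V$. Then $\Delta F_k\subseteq F_k$, and $\Delta$ acts on $F_k/F_{k-1}$ as the scalar $k(k-1)$. This scalar is $0$ for $k=1$ and nonzero for $k\ge2$. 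Hence $\prod_{k=2}^N\bigl(\Delta-k(k-1)\bigr)F_N\subseteq F_1$. After your (correct) removal of $v^0$, we have $d\in F_N$ and $\Delta d=0$. Applying the product to $d$ gives $\prod_{k=2}^N\bigl(-k(k-1)\bigr)d\in F_1$, so $d=o(v')$ with $v'\in V_1$.

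An equivalent route, closer to the paper's methods:
\begin{enumerate}
\item Write $v^k=x^k+L(-1)y^{k-1}$ with $L(1)x^k=0$, using \cite[Lemma 3.3]{DLMM}.
\item Push the $L(-1)$-parts down with \eqref{lift-de}.
\item Now $\Delta o(x^k)=k(k-1)o(x^k)$, so the conditions $\Delta^m d=0$ form a Vandermonde system forcing $o(x^k)=0$ for $k\ge2$.
\end{enumerate}
This is the finite version of the elimination used in Proposition~\ref{n-prop}.
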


Associated to $V$ is a family of associative algebras $A_n(V)=V/O_n(V)$ indexed by $n\in\mathbb{Z}_+$, constructed in \cite{DLM3,Z}. For $A_n(V)$, we recall the following result.

\begin{theo} \cite {DLM3}\label{cite-An(V)}
Suppose that $V$ is a   rational  vertex operator algebra.   Then
  $A_n(V)$ is finite dimensional and the linear map $v\mapsto o(v)$ induces an algebra isomorphism
\begin{equation}\label{asso-iso}A_n(V)\cong \bigoplus_{i=1}^s \bigoplus_{l=0}^{n}\operatorname{End} W^i(l),\end{equation} where $W^i=\bigoplus_{n\in\Z_+}W^i(n)$ with $W^i(0)\neq0$ for $i=1,2,\ldots, s$  are all the inequivalent irreducible modules of $V$.
\end{theo}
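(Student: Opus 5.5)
This statement is quoted from \cite{DLM3}, so the plan is to rebuild it from the module theory of $A_n(V)$ developed in \cite{DLM3,Z}. There are three ingredients. First, for every admissible $V$-module $M=\bigoplus_{m\in\Z_+}M(m)$, the truncation $\Omega_n(M)=\bigoplus_{l=0}^n M(l)$ is an $A_n(V)$-module with $[v]$ acting by $o(v)$. In particular each $W^i(l)$ with $l\le n$ is an $A_n(V)$-module. Second, there is an induction functor from $A_n(V)$-modules to admissible modules. Given an $A_n(V)$-module $U$, one builds an admissible module $\bar M_n(U)$ generated by $U$, with $U$ sitting in degree $n$ modulo the part on which $A_n(V)$ acts through the quotient $A_{n-1}(V)$. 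Third, rationality gives that there are finitely many irreducible admissible modules $W^1,\dots,W^s$, and each graded piece $W^i(l)$ is finite dimensional (Zhu \cite{Z} and \cite{DLM}).

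\textbf{Step 1 (the simple $A_n(V)$-modules).} I would show that every $W^i(l)$ with $0\le l\le n$ is a simple $A_n(V)$-module. The reason is that $W^i$ is irreducible, so each graded piece is an irreducible module for the zero-mode algebra. I would then show these modules are pairwise inequivalent. Two pieces $W^i(l)$ and $W^j(l')$ with $l\neq l'$ are separated by tracking at which level $n'$ the module first fails to factor through $A_{n'-1}(V)$. If $l=l'$ and $i\neq j$, an isomorphism $W^i(l)\cong W^j(l)$ would, by inducing back to an admissible module and taking its irreducible quotient, force $W^i\cong W^j$. Conversely, let $U$ be any simple $A_n(V)$-module. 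Inducing it and passing to an irreducible quotient identifies $U$ with some $W^i(l)$, where $l\le n$ is the smallest level through which $U$ factors. So the simple modules are exactly the $W^i(l)$, and each is finite dimensional.

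\textbf{Step 2 (semisimplicity of all modules).} This is the step I expect to be the main obstacle. Let $U$ be an arbitrary $A_n(V)$-module; we may first split off the part of $U$ that factors through $A_{n-1}(V)$ and argue by induction on $n$. Rationality makes $\bar M_n(U)$ completely reducible, so it is a direct sum of shifted copies of the $W^i$. The delicate point is to prove that $U$ embeds into $\Omega_n(\bar M_n(U))$ as an $A_n(V)$-submodule. For this I would follow \cite{DLM3}: use the explicit construction of $\bar M_n(U)$ by generators and relations, and check that the degree-$n$ part is exactly $U$, with no collapse. Once the embedding holds, $U$ is a submodule of a direct sum of copies of the $W^i(l)$, $l\le n$, and is therefore itself such a direct sum.

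\textbf{Step 3 (the isomorphism).} I would apply Step 2 to the regular module $A_n(V)$. The resulting decomposition shows that any element killing every $W^i(l)$ kills $1$, so it is zero. Hence the map $v\mapsto o(v)$ induces an injective homomorphism into $\bigoplus_{i,l}\operatorname{End}W^i(l)$, and in particular $A_n(V)$ is finite dimensional. By Step 1 the $W^i(l)$ are pairwise inequivalent finite-dimensional simple modules. The Jacobson density theorem, or Wedderburn's theorem, then makes the map surjective, which yields \eqref{asso-iso}.
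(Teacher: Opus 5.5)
The paper does not prove this statement; it quotes it from \cite{DLM3}. Your outline follows the route of that reference: classify the simple $A_n(V)$-modules through $\Omega_n$ and the induced modules $\bar{M}_n(U)$, use rationality to make $\bar{M}_n(U)$ completely reducible, and finish with density. That architecture is sound. Deferring the embedding $U\hookrightarrow\bar{M}_n(U)(n)$ to \cite{DLM3} is no weaker than what the paper does, and Step 3 only needs it for the regular module $A_n(V)$.

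The genuine gap is in Step 1. You separate $W^i(l)$ from $W^j(l')$ for $l\neq l'$ by ``the first level at which the module fails to factor.'' This presupposes that a nonzero $W^i(l)$ does not factor through $A_{l-1}(V)$, and you never justify it. The point is essential. If it failed, your own classification would give $W^i(l)\cong W^j(l')$ for some $l'<l$. The image of $A_n(V)$ would then be a proper twisted-diagonal subalgebra, and density would not produce the full direct sum.

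Here is a short proof. Take homogeneous $u,v$ and $l\ge 1$. Expand $o(u\circ_{l-1}v)$ with the Jacobi identity and discard every term whose rightmost mode lowers the degree by more than $l$. On the degree-$l$ part of any admissible module this gives
\[
o(u\circ_{l-1}v)=X-Y,\qquad X=u_{\operatorname{wt}u-1-l}\,v_{\operatorname{wt}v-1+l},\quad Y=v_{\operatorname{wt}v-1-l}\,u_{\operatorname{wt}u-1+l}.
\]
Now replace $u$ by $L(-1)u$ and use $(L(-1)u)_m=-m\,u_{m-1}$. On the same space this gives $o(L(-1)u\circ_{l-1}v)=-(\operatorname{wt}u-l)X+(\operatorname{wt}u+l)Y$. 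If $O_{l-1}(V)$ annihilates $W^i(l)$, both expressions vanish there, so $X=Y$ and $2lX=0$ on $W^i(l)$ for all $u,v$.

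Pick $0\neq w\in W^i(l)$. Since $W^i$ is irreducible with $W^i(0)\neq 0$, some $x=v_{\operatorname{wt}v-1+l}w$ is nonzero. Applying irreducibility again to $x$, $W^i(l)$ is spanned by the vectors $u_{\operatorname{wt}u-1-l}x=Xw=0$. This contradicts $W^i(l)\neq 0$.

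A smaller point concerns Step 2. The phrase ``split off the part of $U$ that factors through $A_{n-1}(V)$'' is circular as written. The submodule annihilated by $\ker\bigl(A_n(V)\to A_{n-1}(V)\bigr)$ has no complement until semisimplicity is known. The step is also unnecessary. Either $U$ is an $A_{n-1}(V)$-module and you use induction, or you apply the embedding $U\hookrightarrow\bar{M}_n(U)(n)$ together with complete reducibility of $\bar{M}_n(U)$ directly. For Step 3, the regular module alone suffices.
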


A subset  $S$ of $V$ is called a {\em generating set} of $V$ if \begin{equation}\label{spanning}V={\rm span}\{v_{n_1}^{(1)}\cdots v^{(r)}_{n_r}{\bf 1}\mid r\in \Z_+, v^{(1)},\ldots, v^{(r)}\in S, n_1,\ldots, n_r\in\Z\}.\end{equation}  And $V$ is called to be  {\em  finitely generated} if $V$ has a finite generating set.

\begin{lemm}\label{lem-m}Let $V$ be a vertex operator algebra and $d_1,d_2\in \operatorname{Der}V.$
Suppose that $S$ is a generating set of  $V.$ If $d_1=d_2$ on $S,$ then $d_1=d_2$.
\end{lemm}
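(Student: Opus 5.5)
The plan is to reduce to the single derivation $d=d_1-d_2$. The set of derivations is a vector space: the conditions $d\mathbf 1=d\omega=0$ and $[d,u_n]=(du)_n$ are linear in $d$. So $d$ is a derivation of $V$ that vanishes on $S$, and it suffices to show that a derivation vanishing on a generating set is zero. By \eqref{spanning}, this amounts to showing that $d$ kills every spanning vector $v^{(1)}_{n_1}\cdots v^{(r)}_{n_r}\mathbf 1$ with $v^{(j)}\in S$.

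I will prove this by induction on $r$. For $r=0$ the vector is $\mathbf 1$, and $d\mathbf 1=0$ by the definition of a derivation. For the inductive step, write $w=v^{(2)}_{n_2}\cdots v^{(r)}_{n_r}\mathbf 1$, which has length $r-1$, so $dw=0$ by the inductive hypothesis. The derivation property gives
\[
d\bigl(v^{(1)}_{n_1}w\bigr)=[d,v^{(1)}_{n_1}]w+v^{(1)}_{n_1}dw=(dv^{(1)})_{n_1}w+v^{(1)}_{n_1}dw .
\]
The first term vanishes because $dv^{(1)}=0$ (as $v^{(1)}\in S$), so the operator $(dv^{(1)})_{n_1}=0_{n_1}$ is zero. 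The second term vanishes because $dw=0$. Hence $d$ kills every spanning vector, and by linearity $d=0$ on $V$, that is, $d_1=d_2$.

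I do not expect a real obstacle here. The only points to state explicitly are that the map $u\mapsto u_n$ is linear, so $0_n=0$, and that $\operatorname{Der}V$ is closed under differences; one could equally run the same induction directly comparing $d_1$ and $d_2$ without forming $d$.
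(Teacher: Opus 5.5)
Your proposal is correct and takes essentially the same approach as the paper: reduce to a single derivation $d=d_1-d_2$ vanishing on $S$, then use \eqref{spanning} together with $d\mathbf 1=0$ and $[d,u_n]=(du)_n$. Your induction on the length of the spanning monomials simply writes out the step that the paper calls immediate.
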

\begin{proof}
Let $d$ be a derivation of $V$  such that $d|_S=0$. It is sufficient to show $d=0$. But this follows immediately from \eqref{spanning} and the definition of a derivation.
\end{proof}

The lemma below constitutes a key step toward determining the derivations of a rational vertex operator algebra.
\begin{lemm}\label{existence} Let $V$ be a  simple rational vertex operator algebra of CFT type.
For any $d\in \operatorname {Der}V,$ there exists $v=\sum_{i=1}^\infty v^i\in \prod_{i=1}^\infty V_i$ with $v^i\in V_i$ for all $i$ such that $v$ is $o$-additive and $d=\sum_{i=1}^\infty o(v^i)$.
\end{lemm}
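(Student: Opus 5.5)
We build $v$ one weight at a time, using the isomorphism of Theorem~\ref{cite-An(V)} at each stage. Since $V$ is simple, rational and of CFT type, $V$ itself is one of the irreducible modules $W^i$, say $W^1=V$ with $W^1(l)=V_l$. Since $d$ commutes with $L(0)$ by \eqref{d=comm}, $d$ preserves each $V_l$, so $d|_{V_l}\in\operatorname{End}V_l$.

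\textbf{Inductive construction.} We construct homogeneous $v^1,v^2,\ldots$ with $v^i\in V_i$ such that
\[
d=\sum_{i=1}^{n}o(v^i)\quad\text{on } \bigoplus_{l=0}^{n}V_l, \qquad\text{and}\qquad o(v^i)\big|_{\bigoplus_{l<i}V_l}=0 .
\]
Suppose $v^1,\ldots,v^{n-1}$ have been chosen. Put $d_n=d-\sum_{i<n}o(v^i)$. Each $o(v^i)$ preserves weights, so $d_n$ does too, and $d_n$ vanishes on $\bigoplus_{l<n}V_l$. By the surjectivity in \eqref{asso-iso}, there is $x\in V$ whose image in $\bigoplus_{i,l\le n}\operatorname{End}W^i(l)$ is $(d_n|_{V_n}$ on $W^1(n)$, and $0$ on every other summand$)$. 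In particular $o(x)$ vanishes on $V_l$ for $l<n$ and equals $d_n$ on $V_n$.

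\textbf{Main obstacle: making the new term homogeneous of weight $n$.} The element $x$ has components in many weights, while we need $v^n\in V_n$. The plan is to use \eqref{lift-de}, together with $O_n(V)$ and $J(V)$, to shift weights. Relation \eqref{lift-de} gives $o(L(-1)^k u)=(-1)^k\operatorname{wt}u(\operatorname{wt}u+1)\cdots(\operatorname{wt}u+k-1)\,o(u)$. Hence any homogeneous component $x^j\in V_j$ with $1\le j<n$ can be replaced by a nonzero multiple of $L(-1)^{n-j}x^j\in V_n$ without changing its zero mode. The component $x^0\in\C\mathbf 1$ acts by a scalar, and since $o(x)=0$ on $V_0$ (assuming $n\ge 1$), that scalar must be cancelled by the other components acting on $\mathbf 1$. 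In fact $o(u)\mathbf 1=0$ for homogeneous $u$ of positive weight, so $x^0=0$.

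Components of weight greater than $n$ are harder to handle, because \eqref{lift-de} cannot lower weight. Here we use that elements of $O_n(V)$ have zero mode vanishing on the low-degree part $\bigoplus_{l\le n}W^i(l)$ of every module. Modulo $O_n(V)$, every class in $A_n(V)$ should be represented by an element of $\bigoplus_{j\le 2n+1}V_j$ or similar; this uses the standard fact that $O_n(V)$ contains $u\circ_n\mathbf 1$-type elements that lower filtration, possibly with a bound depending on $n$. Alternatively, we avoid lowering altogether: raise everything to a common weight $N\ge$ all weights occurring in $x$, and then set $v^n$ to be that weight-$N$ element. The induction index then becomes the weight bound rather than exactly $n$. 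This suffices, since we only need the sum $\sum_i v^i$ to have finitely many terms in each weight. We can enforce this by choosing the weights $N_n$ strictly increasing, and then renaming the terms so that they are indexed by weight, with zero components where no term was placed.

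\textbf{Additivity.} With terms of strictly increasing weights $N_n$ such that $o(\text{term}_n)$ vanishes on $\bigoplus_{l<n}V_l$, every $u\in V$ of bounded weight is killed by all but finitely many terms. Hence $v$ is $o$-additive. On $V_l$ the sum $\sum o(v^i)$ reduces to the first $l$ terms, which equal $d$ by construction, so $d=\sum_i o(v^i)$ on all of $V$.
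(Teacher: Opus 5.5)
Your proposal is correct and is essentially the paper's argument: use surjectivity of $A_n(V)\to\bigoplus_{i,\,l\le n}\operatorname{End}W^i(l)$ to correct $d$ one weight at a time, kill the weight-zero component because $o(u)\mathbf 1=0$ for $\operatorname{wt}u\ge 1$, and use \eqref{lift-de} to push each correction term into a single space $V_{N_n}$ with $N_n$ strictly increasing. The only difference is how the target weight is chosen: the paper takes the minimal $N_n$ with $V=\bigoplus_{i\le N_n}V_i+O_n(V)$, while you take the top weight of the chosen preimage, so your speculative detour about lowering weights modulo $O_n(V)$ is unnecessary and can be dropped.
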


\begin{proof}
By Theorem \ref{cite-An(V)}, each $A_n(V)$ is finite-dimensional. Hence for every $n\in\mathbb Z_+$, we may pick the minimal integer $N_n\in\mathbb Z_+$ satisfying
\[
V=\bigoplus_{i=0}^{N_n}V_i + O_n(V).
\]
The resulting sequence $\{N_n\}_{n\ge 1}$ is strictly increasing. Now fix a sufficiently large integer $t$. Then by \eqref{asso-iso}, there exist $u^{(t+i)}\in \bigoplus_{k=0}^{N_{t+i}}V_k$ for $i\geq0$ such that \begin{equation}\label{un}d|_{\bigoplus_{n=0}^t V_n}=o(u^{(t)})|_{\bigoplus_{n=0}^t V_n}\quad {\rm and}\quad o(u^{(t+i+1)})|_{\bigoplus_{n=0}^{t+i} V_n}=0 \end{equation} and  that \begin{equation}\label{sumadd} o(u^{(t)}+u^{(t+1)}+\cdots+u^{(t+i+1)})|_{V_{t+i+1}}=d|_{V_{t+i+1}}.\end{equation}  Write $u^{(t+i)}=\sum_{k=0}^{N_{t+i}}u^{(t+i,k)}$ with $u^{(t+i,k)}\in V_k$ for  all $0\le k\le N_{t+i}$.
 Since $V_0=\mathbb C\mathbf 1$, we have $u^{(t+i,0)}=\lambda_i\mathbf 1$ for some $\lambda_i\in\mathbb C$. From \eqref{un},
\[
0=o(u^{(t+i)})\mathbf 1=o(u^{(t+i,0)})\mathbf 1=\lambda_i\mathbf 1,
\]
which forces $u^{(t+i)}\in \bigoplus_{k=1}^{N_{t+i}}V_k$. Furthermore, in view of \eqref{lift-de}, we may substitute $u^{(t+i+1)}$ with an element $v^{(t+i+1)}\in V_{N_{t+i+1}}$ satisfying $o(u^{(t+i+1)})=o(v^{(t+i+1)}).$ So we may assume
\[
u^{(t+i+1)}\in V_{N_{t+i+1}}\quad \text{for\ all}\ i\geq0.
\]
Set $u=\sum_{i=0}^\infty u^{(t+i)}\in \prod_{i=1}^\infty V_i$.  Now by \eqref{un} and \eqref{sumadd}, it is straightforward to verify that $u$ is $o$-additive and $d=\sum_{i=0}^\infty o(u^{(t+i)})$, completing the proof.
\end{proof}

For each $d\in\operatorname{Der}V,$ define
\[
E_d=\biggl\{v=\sum_{i=1}^\infty v^{(i)}\in \prod_{i=1}^\infty V_i\ \text{with}\ v^{(i)}\in V_i\;\bigg|\;v\text{ is }o\text{-additive and }d=\sum_{i=1}^\infty o(v^{(i)})\biggr\}.
\]
Lemma \ref{existence} guarantees that $E_d$ is nonempty; in fact, $E_d$ is infinite. For any $u=\sum_{i=1}^\infty u^{(i)}\in E_d,$ if there exists $i_0\geq 2$ such that  $\ u^{(i_0)}\notin \operatorname{Ker}L(1)$, let $$\mathfrak t(u)={\rm min}\{i\mid u^{(i)}\notin \operatorname{Ker}L(1), i\geq2\};$$ otherwise, set $\mathfrak t(u)=\infty$. As a matter of fact, the set \(E_d\) provides a characterization for $d$ being an inner derivation.
\begin{prop}\label{n-prop}
Let $V$ be a vertex operator algebra of CFT type and $d$ a derivation of $V.$  Then $d\in \operatorname{IDer}V$ if and only if there exists $b\in E_d$ such that $\mathfrak t(b)=\infty$.

\end{prop}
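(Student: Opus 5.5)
The forward implication is immediate. If $d\in\operatorname{IDer}V$, then Proposition \ref{cit-fro-dg} gives $v\in V_1$ with $d=o(v)$. The element $b=v\in\prod_{i\ge1}V_i$ has $b^{(1)}=v$ and $b^{(i)}=0$ for $i\ge2$. It is trivially $o$-additive and lies in $E_d$. Since $0\in\operatorname{Ker}L(1)$, we get $\mathfrak t(b)=\infty$.

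For the converse, let $b=\sum_i b^{(i)}\in E_d$ with $b^{(i)}\in\operatorname{Ker}L(1)$ for all $i\ge2$. Set $D=d-o(b^{(1)})$. It is a derivation, because $o(b^{(1)})$ is an inner derivation by Proposition \ref{cit-fro-dg}. It is also the well-defined operator $D=\sum_{i\ge2}o(b^{(i)})$, thanks to $o$-additivity. The goal is to show $D=0$, i.e. $\sum_{i\ge2}o(b^{(i)})u=0$ for every $u\in V$; then $d=o(b^{(1)})$ is inner. The tool is the commutator formula $[L(n),v_m]=\sum_{j\ge0}\binom{n+1}{j}(L(j-1)v)_{m+n-j}$. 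For a quasi-primary homogeneous $v$ of weight $i$ it reduces to
\[
[L(n),v_m]=\bigl((i-1)(n+1)-m+n\bigr)v_{m+n}.
\]
With $m=i-1$ this yields
\[
[L(-1),o(v)]=-(i-1)\,v_{i-2},\qquad [L(1),o(v)]=i\,v_{i}.
\]
Both are meaningful termwise on any $u$, since only finitely many $b^{(i)}$ act nontrivially near $u$ (I will check that $o$-additivity also controls the shifted modes, using the grading and the finite-generation of the relevant weight spaces). Since $[D,L(\pm1)]=0$ by \eqref{d=comm}, we get for every $u\in V$ the two relations
\[
\sum_{i\ge2}(i-1)\,b^{(i)}_{i-2}u=0,\qquad \sum_{i\ge2}i\,b^{(i)}_{i}u=0.
\]

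The main step, and the main obstacle, is to upgrade these relations to $\sum_{i\ge2}b^{(i)}_{i-1}u=0$. I would combine them with the derivation property $[D,u_n]=(Du)_n$. First I would try to compute $Du$ by writing $u=u_{-1}\mathbf 1$ and moving $o(b^{(i)})$ past $u_{-1}$ via the commutator formula
\[
[v_m,u_n]=\sum_{j\ge0}\binom{m}{j}(v_ju)_{m+n-j}.
\]
Using $D\mathbf 1=0$ and skew-symmetry
\[
v_{m}u=\sum_{j\ge0}(-1)^{m+j+1}\tfrac{1}{j!}L(-1)^j u_{m+j}v,
\]
I would then express $o(b^{(i)})u$ through the modes $b^{(i)}_{i-2}$ and $b^{(i)}_i$ applied to suitable vectors, together with $L(-1)$-images. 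The two displayed relations should then force the total to vanish.

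If this direct manipulation is not conclusive, I would argue by contradiction with a minimal counterexample. Take the smallest weight $k$ at which $D|_{V_k}\ne0$, and note $k\ge1$ since $D\mathbf 1=0$. Only finitely many $b^{(i)}$ act nontrivially on $V_{\le k+1}$. Applying $L(1)$ and $L(-1)$, which move between $V_k$ and $V_{k\pm1}$ and commute with $D$, should then show that the nonzero part of $D$ on $V_k$ is detected on $V_{k-1}$, where $D$ vanishes. That is the desired contradiction, and I expect the bookkeeping of this descent to be the delicate point of the proof.
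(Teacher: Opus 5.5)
Your forward direction is fine, but the converse has a genuine gap. The step you flag as the main obstacle, showing $D=\sum_{i\ge2}o(b^{(i)})=0$, is never carried out, and neither proposed route can do it as described.

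\begin{itemize}
\item First-order commutators with $L(\pm1)$ turn zero modes into the shifted modes $b^{(i)}_{i-2}$ and $b^{(i)}_i$. You give no mechanism for getting back to zero modes.
\item Writing $u=u_{-1}\mathbf 1$ and using $[D,u_{-1}]=(Du)_{-1}$ and $D\mathbf 1=0$ is circular: it just returns $o(b^{(i)})u$.
\item The minimal-counterexample descent fails for a structural reason. If $D|_{V_{k-1}}=0$, commuting with $L(\pm1)$ only shows that $D$ kills $L(-1)V_{k-1}$ and preserves $\operatorname{Ker}L(1)\cap V_k$. So the action of $D$ on quasi-primary vectors of weight $k$ is invisible from $V_{k-1}$, and $\mathfrak{sl}_2$-equivariance alone cannot force it to vanish.
\end{itemize}
What must be used is the specific shape of $D$: a sum of zero modes of quasi-primary vectors of pairwise distinct weights.

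There is also a slip in your commutator formula, which has an index shift. It should read $[L(n),v_m]=\sum_{j\ge0}\binom{n+1}{j}(L(j-1)v)_{m+n+1-j}$. For quasi-primary $v\in V_i$ and $n\in\{-1,0,1\}$ this gives $((n+1)(i-1)-m)v_{m+n}$. Hence $[L(1),o(v)]=(i-1)v_i$, not $iv_i$.

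The missing idea is to compose the two commutators. For quasi-primary $v\in V_i$,
\[
[L(-1),[L(1),o(v)]]=(i-1)[L(-1),v_i]=-i(i-1)\,o(v).
\]
So zero modes of quasi-primaries are eigenvectors of $\operatorname{ad}L(-1)\operatorname{ad}L(1)$, with eigenvalue depending injectively on $i\ge2$. Since $[L(-1),[L(1),D]]=0$, and the computation can be done termwise thanks to $o$-additivity, you get $\sum_{i\ge2}i(i-1)o(b^{(i)})=0$. That is, $\sum_{i\ge2}i(i-1)b^{(i)}\in E_0$, again with quasi-primary components. Iterating gives $\sum_{i\ge2}\bigl(i(i-1)\bigr)^k o(b^{(i)})u=0$ for all $k\ge1$. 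For fixed $u$ only finitely many terms are nonzero, so a Vandermonde argument yields $o(b^{(i)})u=0$ for every $i\ge2$, hence $d=o(b^{(1)})$.

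This is the route the paper takes. It does the elimination by repeatedly replacing $b$ with $b-\frac{1}{j_0(j_0-1)}\sum_i i(i-1)b^{(i)}$ on a finite generating subspace $\bigoplus_{i\le N}V_i$, then applies Lemma \ref{lem-m}. The pointwise Vandermonde version avoids the finite-generation input. The paper takes that input from rationality, even though the proposition only assumes CFT type.
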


\begin{proof}

If $d=o(v)\in \operatorname{IDer}V$  for some $v\in V_1$ (see Proposition \ref{cit-fro-dg}), then $v\in E_d$ and $\mathfrak t(v)=\infty$. Conversely, let $b\in E_d$ satisfy $\mathfrak t(b)=\infty$. For any $u=\sum_{i=1}^\infty u^{(i)}\in E_d$, we compute \begin{eqnarray}\label{zerosum}0\!\!\!&=&\!\![L(-1), [L(1), d]] \quad\quad  \quad\quad (\text{by} \ \eqref{d=comm})\nonumber \\
&=&\!\![L(-1),[L(1),\sum_{i=1}^\infty o(u^{(i)}]]=\sum_{i=1}^\infty [L(-1),[L(1), u^{(i)}_{i-1}]]\nonumber\\
&=&\!\!\sum_{i=2}^\infty [L(-1),(i-1)u^{(i)}_i+(L(1)u^{(i)})_{i-1}]\quad\quad (\text{by \cite[Lemma 2.5]{DLMM}})\\
&=&\!\!-\sum_{i=2}^\infty (i-1)o(iu^{(i)}+L(1)u^{(i)})\nonumber\\
&=&\!\!-\sum_{i=1}^{\mathfrak t(u)-2}i(i-1)o(u^{(i)})-\sum_{i=\mathfrak t(u)-1}^\infty o(iL(1)u^{(i+1)}+i(i-1)u^{(i)}). \nonumber\end{eqnarray} Since $u=\sum_{i=1}^\infty u^{(i)}\in E_d$, we deduce that \begin{equation}\label{gener-zer}\sum_{i=1}^{\mathfrak t(u)-2}i(i-1)u^{(i)}+\sum_{i=\mathfrak t(u)-1}^\infty (iL(1)u^{(i+1)}+i(i-1)u^{(i)})\in E_0.\end{equation} In particular, applying this  to the element $b$ yields
\begin{equation}\label{new} \sum_{i=2}^\infty i(i-1)b^{(i)}\in E_0.\end{equation}It should be noted that the idea for deriving \eqref{zerosum}  comes essentially  from  \cite{DLMM}.

Since $V$ is rational, it follows from \cite[Theorem 2]{DZ} that $V$ is finitely generated. Accordingly, there exists some $N\in\mathbb Z_+$ such that $\bigoplus_{i=0}^N V_i$ generates $V.$ As $b=\sum_{i=1}^\infty b^{(i)}$ is $o$-additive on $V,$ we may pick $t_N\in\mathbb Z_+$ satisfying
\[
o(b^{(j)})\big|_{\bigoplus_{i=0}^N V_i}=0\quad  \text{for\ all}\ j>t_N.
\]In particular, we have \[d|_{\bigoplus_{i=0}^NV_i}=\sum_{j=1}^{t_N}o(b^{(j)})|_{\bigoplus_{i=0}^NV_i}.\]  Suppose  that there exists $j_0\in\{2,3,\ldots,t_N\}$ such that $b^{(j_0)}\neq0$.
Then in view of \eqref{new}, it is harmless to replace $b$ by  $$b-\frac{1}{j_0(j_0-1)}\sum_{i=1}^\infty i(i-1)b^{(i)}=\sum_{i=1}^\infty (1-\frac{i(i-1)}{j_0(j_0-1)})b^{(i)}.$$ But in this case we would have $b^{(j_0)}=0.$ After repeating this process finitely many steps we  may assume that $b^{(j)}=0$ for all $2\le j\le t_N$. Thus, we further obtain  $$d|_{\bigoplus_{i=0}^NV_i}=o(b^{(1)})|_{\bigoplus_{i=0}^NV_i}.$$ Now by Proposition \ref{cit-fro-dg} and Lemma \ref{lem-m} one has $d=o(b^{(1)})$.
\end{proof}

For a vertex operator algebra $V=\bigoplus_{i\in\Z_+} V_i$, we equip each homogeneous component $V_i$ with the Euclidean topology, \(V=\bigoplus_{i=0}^\infty V_i\)  with the direct sum topology, and $\operatorname{End}V$ with the pointwise convergence topology. We now state the following well-known result.

\begin{lemm}\label{con-lem}
Let $V=\bigoplus_{i\in\Z_+} V_i$ be a vertex operator algebra.
\begin{enumerate}
    \item[(1)] For every integer $i\geq 0,$ the linear map $o\colon V_i \to \operatorname{End}V$ is continuous.
    \item[(2)] The linear map $\tilde{o}\colon V_1\big/\big(J(V)\cap V_1\big) \to \operatorname{End}V$ induced by $o\colon V_1 \to \operatorname{End}V$ is a topological embedding (see \eqref{j(v)}).
\end{enumerate}
\end{lemm}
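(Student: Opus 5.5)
Part (1) rests on the finite-dimensionality of each graded piece together with the fact that $v_{\operatorname{wt}v-1}$ preserves grading. Fix $i\ge 0$. The topology on $\operatorname{End}V$ is pointwise convergence with respect to the direct sum topology on $V$. Hence it suffices to check that, for each fixed $w\in V$, the map $V_i\to V$ given by $v\mapsto o(v)w$ is continuous. Writing $w$ as a finite sum of homogeneous components, we may assume $w\in V_m$. Since $o(v)$ has degree zero, this map lands in the finite-dimensional space $V_m$. It is a linear map between finite-dimensional spaces with their Euclidean topologies, so it is automatically continuous. The inclusion $V_m\hookrightarrow V$ is also continuous for the direct sum topology. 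This proves (1).

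For (2), the map $\tilde o$ is well defined and injective by the very definition of $J(V)$: its kernel on $V_1$ is exactly $J(V)\cap V_1$. Continuity of $\tilde o$ follows from (1) and the universal property of the quotient topology on the finite-dimensional space $Q:=V_1/(J(V)\cap V_1)$. It remains to show that $\tilde o$ is a homeomorphism onto its image, i.e.\ that the inverse $\tilde o(Q)\to Q$ is continuous for the subspace topology inherited from the pointwise topology.

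To do this, I would choose a basis $\bar v_1,\dots,\bar v_k$ of $Q$. Injectivity makes $o(v_1),\dots,o(v_k)$ linearly independent in $\operatorname{End}V$. The key step is to find finitely many vectors $w_1,\dots,w_r\in V$ such that the map $Q\to V^r$, $\bar v\mapsto (o(v)w_1,\dots,o(v)w_r)$, is already injective. This holds because the finite-dimensional space of linear functionals on $Q$ of the form $\bar v\mapsto \langle \xi, o(v)w\rangle$, with $w\in V$ and $\xi$ a coordinate functional on a graded piece, separates the points of $Q$; hence finitely many such functionals suffice. Then $\Phi\colon \tilde o(Q)\to V^r$, $A\mapsto (Aw_1,\dots,Aw_r)$, is continuous for the pointwise topology. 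Its restriction gives a linear injection from the $k$-dimensional space $\tilde o(Q)$ into a finite-dimensional subspace of $V^r$, where $V^r$ carries the Euclidean topology of a finite sum of graded pieces. A linear injection between finite-dimensional spaces has a continuous linear left inverse $\Psi$. Therefore $\tilde o^{-1}=\Psi\circ\Phi$ on $\tilde o(Q)$ is continuous.

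The main point needing care is the choice of the finitely many test vectors $w_j$, which is where the pointwise topology (rather than a finer one) could cause trouble. With $Q$ finite dimensional this is handled by the separation argument above. I would also remark that on any finite-dimensional subspace of $\operatorname{End}V$ the pointwise topology is Hausdorff, so it coincides with the unique Hausdorff vector-space topology. This gives an alternative one-line justification of (2).
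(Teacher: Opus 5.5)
Your proof is correct. The paper gives no proof of this lemma; it is introduced only as a ``well-known result''. So there is no competing argument to compare with, and what you wrote is the standard verification the statement calls for. Part (1) follows from degree preservation of $o(v)$ and finite-dimensionality of each $V_m$. Part (2) follows from injectivity of $\tilde o$ together with the fact that a finite-dimensional subspace of the Hausdorff topological vector space $\operatorname{End}V$ (pointwise topology) carries the Euclidean topology; this is your closing one-line remark.

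There is one small bookkeeping slip. If $\Psi$ is a left inverse of $\Phi|_{\tilde o(Q)}$, then $\Psi\circ\Phi$ is the identity of $\tilde o(Q)$, not $\tilde o^{-1}$. Instead, take $\Psi$ to be a linear left inverse of the injection $Q\to W$, $\bar v\mapsto (o(v)w_1,\dots,o(v)w_r)$. Here $W=\prod_{j=1}^r\bigoplus_{m\in F_j}V_m$, and $F_j$ is the finite set of weights occurring in $w_j$. With this choice $\tilde o^{-1}=\Psi\circ\Phi$ holds literally on $\tilde o(Q)$.

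This corrected version also gives something the paper tacitly needs at the end of the proof of Theorem~\ref{mainr}. There one passes from $o(\tilde b_n^{(1)})\to d$ to convergence of $\pi(\tilde b_n^{(1)})$. However, $d$ is not known a priori to lie in $\tilde o(Q)$, and an embedding by itself does not give that conclusion. Since $d$ preserves degrees, $(dw_j)_j\in W$. Extending $\Psi$ linearly to all of $W$ then gives $\pi(\tilde b_n^{(1)})=\Psi\bigl((o(\tilde b_n^{(1)})w_j)_j\bigr)\to\Psi\bigl((dw_j)_j\bigr)$. Thus $\tilde o(Q)$ is closed in $\operatorname{End}V$, and the limit used there exists.
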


 Our  goal is to prove that $E_d$ contains an element of $V,$ which is formulated as the following theorem.

\begin{theo}\label{mainr}
Let $V$ be a   simple   rational vertex operator algebra of  CFT type. Then ${\rm Der}\, V=\{o(v)\mid v\in V_1\}=\operatorname{IDer}V.$
\end{theo}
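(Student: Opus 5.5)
By Proposition \ref{n-prop}, it suffices to produce some $b\in E_d$ with $\mathfrak t(b)=\infty$, i.e.\ with $b^{(i)}\in\operatorname{Ker}L(1)$ for every $i\ge 2$. The plan is to start from an arbitrary $u\in E_d$, which exists by Lemma \ref{existence}, and to kill the non-primary components one weight at a time using the family of $o$-null elements \eqref{gener-zer}. Then pass to a limit, and control that limit with Lemma \ref{con-lem}.

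\textbf{Key step.} Suppose $u\in E_d$ has $\mathfrak t(u)=m<\infty$. Relation \eqref{gener-zer} gives an element $z(u)\in E_0$ whose weight-$(m-1)$ component is $(m-1)L(1)u^{(m)}+(m-1)(m-2)u^{(m-1)}$. Its lower components $i(i-1)u^{(i)}$, for $i\le m-2$, are primary or of weight $1$.

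I would instead work with the $L(1)$-adjusted null elements obtained from \eqref{lift-de}. For $w\in V_{m-1}$, the element $L(-1)w+(m-1)w$ lies in $E_0$. Decompose $V_m=L(-1)V_{m-1}\oplus(\operatorname{Ker}L(1)\cap V_m)$; this holds for simple CFT type VOAs with a nondegenerate invariant form, and rational simple CFT type VOAs may be assumed to have one after the standard reduction $L(1)V_1=0$. Write $u^{(m)}=L(-1)w+p$ with $p$ primary. Replace $u$ by $u-(L(-1)w+(m-1)w)$. This stays in $E_d$ (adding an element of $E_0$ preserves $o$-additivity, since the added term is a finite sum). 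The weight-$m$ component becomes the primary $p$, and only the weight-$(m-1)$ component changes. Repeating downward from $m$ never reaches weight $1$ in a problematic way, because weight-$1$ components impose no condition in $\mathfrak t$. Hence finitely many steps clear all non-primary components up to any prescribed weight $M$.

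The hard part is that these corrections must be made at all weights $m\to\infty$ simultaneously. Fixing weight $m$ alters weights below $m$ in $V_{m-1}, V_{m-2},\dots$ (the $w$ itself need not be primary), so there is a cascade down to $V_1$. I would organize this as a sequence $b_M\in E_d$ with $\mathfrak t(b_M)>M$ and try to take a limit in $\prod V_i$. There are two difficulties:
\begin{enumerate}
\item[(a)] each fixed component should stabilize, or at least its $o$-image should converge;
\item[(b)] the limit should remain $o$-additive and lie in $E_d$.
\end{enumerate}

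To sidestep (a), I would use finite generation instead, as in the proof of Proposition \ref{n-prop}. Fix $N$ such that $\bigoplus_{i\le N}V_i$ generates $V$. For each $M$, $o$-additivity of $b_M$ plus the reduction \eqref{new} applied to $b_M$ truncated suitably should give $d=o(c_M)$ on $\bigoplus_{i\le N}V_i$ for some $c_M\in V_1$. The difficulty is that \eqref{new} needs $\mathfrak t=\infty$, and I would try to prove a truncated version: if $\mathfrak t(b)>t_N$, where $t_N$ is the $o$-additivity threshold on $\bigoplus_{i\le N}V_i$, the same elimination works. This circularity, since $t_N$ depends on $b$, is the main obstacle.

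I would attack it with Lemma \ref{con-lem}. The elements $o(c_M)$ restricted to the finite-dimensional space $\bigoplus_{i\le N}V_i$ approximate $d$ there. By (2), a convergent sequence of operators $o(c_M)$ forces $c_M$ to converge modulo $J(V)\cap V_1$ to some $c\in V_1$. Then $d=o(c)$ on the generators, and Lemma \ref{lem-m} with Proposition \ref{cit-fro-dg} gives $d=o(c)$.
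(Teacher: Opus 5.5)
\paragraph{Verdict.} The proposal has a genuine gap, and you flag it yourself: you never get from ``$\mathfrak t(b_M)>M$ for every $M$'' to ``$d=o(c)$ on a generating space''.

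\paragraph{What matches the paper, and where the gap is.} Your key step, the cascade $u\mapsto u-(L(-1)w+(m-1)w)$ that clears non-primary components up to any prescribed weight, is exactly the first half of the paper's proof. There it appears as the claim $\sup_{u\in E_d}\mathfrak t(u)=\infty$. (The decomposition $V_m=\operatorname{Ker}L(1)\oplus L(-1)V_{m-1}$ for $m\ge2$ needs no invariant form. In any CFT-type VOA, $L(1)L(-1)=2L(0)+L(-1)L(1)$ has all eigenvalues $\ge 2k$ on $V_k$, by induction on $k$, so it is invertible on $V_{m-1}$ for $m\ge 2$. A ``reduction to $L(1)V_1=0$'' by changing $\omega$ would not be harmless anyway, since it changes which maps are derivations.)

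The gap is everything after this step. You never obtain a $c_M\in V_1$ with $d=o(c_M)$ on $\bigoplus_{i\le N}V_i$. That would need $\mathfrak t(b_M)$ to outrun the $o$-additivity threshold of $b_M$ on the generating space, and nothing controls how that threshold grows with $M$. A single such $c_M$ would already finish the proof by Lemma~\ref{lem-m}. So your limiting argument only has content if the $o(c_M)$ merely approximate $d$, and then you are assuming they converge to $d$ on the generators, which is exactly what must be shown. Lemma~\ref{con-lem}\,(2) only turns convergence of the operators $o(c_M)$ into convergence of the $c_M$ modulo $J(V)\cap V_1$; it does not produce that convergence.

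\paragraph{What the paper adds, and where it stops.} The paper has one device you are missing for the middle step: it does not need $\mathfrak t=\infty$ to use \eqref{gener-zer}.
\begin{enumerate}
\item[(i)] It replaces $b_n$ by $\hat b_n$. The weight-$\mathfrak t(b_n)$ component is set to $0$ and the tail is shifted up one weight, $\hat b_n^{(i)}=-\frac{1}{i-1}L(-1)b_n^{(i-1)}$ for $i>\mathfrak t(b_n)$. By \eqref{lift-de} the zero modes are unchanged.
\item[(ii)] Then \eqref{gener-zer} applied to $\hat b_n$ gives an element of $E_0$ whose components in weights $2,\dots,\mathfrak t(b_n)-1$ are exactly $i(i-1)b_n^{(i)}$.
\item[(iii)] These components can therefore be killed one at a time as in Proposition~\ref{n-prop}. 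The cost is scrambling weights $\ge\mathfrak t(b_n)$, and no threshold $t_N$ is involved.
\end{enumerate}

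However, the paper then closes with precisely the limit you were worried about. From $o(\tilde b_n^{(1)})=d-\sum_{i\ge\mathfrak t(b_n)}o(\tilde b_n^{(i)})$ it lets $n\to\infty$, using only that each fixed component $\tilde b_n^{(i)}$ tends to $0$. Componentwise convergence does not justify moving the limit inside this sum. By \eqref{lift-de}, $o(L(-1)^m a)=(-1)^m m!\,o(a)$ for $a\in V_1$, so $a+\frac{(-1)^{m+1}}{m!}L(-1)^m a\in E_0$ for every $m\ge1$. These elements have the same shape as $\tilde b_n$ with $d=0$, and their tails tend to $0$ componentwise. Yet $o$ of their weight-one part is $o(a)$ for every $m$, which is nonzero, for instance, in an affine VOA.

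So the obstacle you isolated is the real crux: uniform control of the tail contributions on the finite generating space $\bigoplus_{i\le N}V_i$. Neither your proposal nor the paper's written argument supplies it.
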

\begin{proof}
Take an arbitrary derivation $d\in \operatorname{Der}V.$ By Proposition \ref{cit-fro-dg}, it suffices to prove that $d = o(b)$ for some $b\in V_1$. We first claim that
\[
\sup_{u\in E_d} \mathfrak{t}(u) = \infty.
\]
Suppose, for contradiction, that this supremum is finite. Then there exists a positive integer $t$ such that $t = \max\big\{\mathfrak{t}(u) \,\big|\, u\in E_d\big\}$. Choose an element $b=\sum_{i=1}^{\infty} b^{(i)}\in E_d$ with $\mathfrak{t}(b)=t$. It then follows that $b^{(t)} \notin \operatorname{Ker}L(1)$. From the decomposition \[V_t=\operatorname{Ker}(L(1): V_t\rightarrow V_{t-1})\oplus L(-1)V_{t-1}\quad  \text{(see \cite[Lemma 3.3]{DLMM})},\] we write $b^{(t)}=x^{(t)}+L(-1)y$ with $x^{(t)}\in\operatorname{Ker} L(1)$ and $y\in V_{t-1}$. Applying  \eqref{lift-de} gives
\[
o(b^{(t)})=o(x^{(t)})-(t-1)o(y).
\]
It is then straightforward to verify that $\mathfrak b=\sum_{i=1}^\infty \mathfrak b^{(i)}\in E_d$, where
\[
\mathfrak b^{(i)}=
\begin{cases}
b^{(i)} & \text{if }1\le i\le t-1 \text{ or }i\ge t+1,\\
b^{(t-1)}-(t-1)y & \text{if }i=t-1,\\
x^{(t)} & \text{if }i=t.
\end{cases}
\]
We apply the same reduction procedure to $\sum_{i=1}^{t-1}\mathfrak b^{(i)}$ as was done for $b^{(t)},$ yielding an element $\sum_{i=1}^{t-1}x^{(i)}\in\bigoplus_{i=1}^{t-1}V_i$ satisfying $\sum_{i=2}^{t-1}x^{(i)}\in\operatorname{Ker} L(1)$ and
\[
o\biggl(\sum_{i=1}^{t-1}\mathfrak b^{(i)}\biggr)=o\biggl(\sum_{i=1}^{t-1}x^{(i)}\biggr).
\]
 This leads to $$x=\sum_{i=1}^tx^{(i)}+\sum_{i=t+1}^\infty b^{(i)}\in E_d,$$ for which $\mathfrak t(x)> \mathfrak t(b)$, contradicting the maximality of $\mathfrak t(b)$.

 Since $\sup\limits_{u\in E_d} \mathfrak{t}(u) = \infty$, for every integer $n\geq 1$, there exists an element
$$
b_n = \sum_{i=1}^{\infty} b_n^{(i)} \in E_d
$$
such that $\mathfrak{t}(b_n) \geq n+2$. Define \[\hat b^{(i)}_n=\left\{\begin{array}{lllll} b_n^{(i)} &\mbox{if}\ 1\le i\le \mathfrak t(b_n)-1;\\ 0 &\mbox{if}\ i=\mathfrak t(b_n);\\ -\frac{1}{i-1} L(-1)b_n^{(i-1)}\ (\text{see}\ \ref{lift-de}) &\mbox{if}\ i\geq \mathfrak t(b_n)+1,\end{array}\right.\] and set $\hat b_n=\sum_{i=1}^\infty \hat b_n^{(i)}$. One checks readily that $\hat b_n\in E_d$ and  $\mathfrak t(\hat b_n)=\mathfrak t(b_n)+1$.  Applying \eqref{gener-zer} to $\hat b_n$ and following the argument of Proposition \ref{n-prop}, we can first  make $b_n^{(\mathfrak t(b_n)-1)}=0$ and then $b_n^{(\mathfrak t(b_n)-2)}=0$. Iterating this procedure, we deduce  $b_n^{(\mathfrak (i)}=0$ for all $2\le i\le \mathfrak t(b_n)-1$.  Using this vanishing condition, we produce  an element $\tilde{b}_n \in E_d$ of the form
$$
\tilde{b}_n = \tilde{b}_n^{(1)} + \sum_{i=\mathfrak{t}(b_n)}^{\infty} \tilde{b}_n^{(i)}.
$$
We first note that $\lim\limits_{n\to\infty} \tilde{b}_n^{(i)} = 0$ for all $i\geq 2$. Combining this with Lemma \ref{con-lem}\,(1), we further obtain
$$
\lim_{n\to\infty} o\left(\tilde{b}_n^{(i)}\right) = 0.
$$
Since $\tilde{b}_n \in E_d$, we have
$$
o(\tilde{b}_n^{(1)}) = d - \sum_{i=2}^{\infty} o\left(\tilde{b}_n^{(i)}\right).
$$
Taking limits on both sides, we obtain
\[
d=\lim_{n\to\infty} o\big(\tilde{b}_n^{(1)}\big)=\lim_{n\to\infty} \tilde o\big(\pi(\tilde{b}_n^{(1)})\big),
\]
where $\pi\colon V_1\to V_1/\big(J(V)\cap V_1\big)$ denotes the quotient projection. By Lemma \ref{con-lem}\,(2), the limit $\lim\limits_{n\to\infty} \pi(\tilde{b}_n^{(1)})$ exists. Let $\xi=\lim\limits_{n\to\infty} \pi(\tilde{b}_n^{(1)})$. Since $\pi$ is surjective, there exists some $b\in V_1$ such that $\pi(b)=\xi$. Consequently,
\[
d=\lim_{n\to\infty} \tilde o\big(\pi(\tilde{b}_n^{(1)})\big)=\tilde o(\xi)=\tilde o(\pi(b))=o(b),
\]
as required.\end{proof}

\section*{Acknowledgment}
J. Han is supported by the National Natural Science Foundation of China (No. 12271406).

\end{document}